\newtheorem{theorem}{Theorem}[section]
\newtheorem{definition}{Definition}[section]
\newtheorem{prop}{Proposition}[section]
\newtheorem{lemma}{Lemma}[section]
\theoremstyle{remark}
\newtheorem{example}{ Example}[section]
\newtheorem{remark}{Remark}
\tikzset{join/.code=\tikzset{after node path={%
			\ifx\tikzchainprevious\pgfutil@empty\else(\tikzchainprevious)%
			edge[every join]#1(\tikzchaincurrent)\fi}}}
\tikzset{>=stealth',every on chain/.append style={join},
	every join/.style={->}}
\tikzstyle{labeled}=[execute at begin node=$\scriptstyle,
\date{}
\begin{document}
	\title{On projective representations of Plesken Lie algebras}
	\author{P G Romeo and Arjun S N\\Dept. of Mathematics\\ Cochin University of Science and Technology\\
	Kerala, INDIA.}
	\maketitle
	\begin{abstract}
		In this article we describe the projective representation of Plesken Lie algebras and equivalent central extensions of the same. Further it is also shown that there exists a
		bijective correspondence between second cohomology group, equivalent central extensions and projectively equivalent projective representations of Plesken Lie algebras.
	\end{abstract}
	
	\section{Introduction}
	Lie algebras were introduced by Sophus Lie in connection with his studies of Lie groups; Lie groups are not only groups but also smooth manifolds, the group operations being smooth. The Lie algebras are certain vector spaces endowed with an operation called the Lie  bracket satisfying certain algebraic properties. Further there is a fruitful Lie group Lie algebra correspondence which allows one to relate a Lie group to a Lie algebra or vice versa. The Lie algebra, being a linear object, is more immediately accessible than the group. As with any associative algebra, the group algebra $\mathbb{F}[G]$ for any finite group $G$ over the field $\mathbb{F}$ can be made into a Lie algebra by means for a bracket product.  In \cite{Plesken}, Arjeh M. Cohen and D. E. Taylor introduced certain Lie algebra of a finite group and call it as Plesken Lie algebra. In the paper they discussed the structure of Plesken Lie algebras and explicitly determine the groups for which the Plesken Lie algebra simple and semisimple over complex field. In \cite{Pleskenfinite}, John Cullinan and Mona Merling describes the structural properties of Plesken Lie algebra over finite fields. In \cite{romeo}, Arjun and Romeo describes the linear representations of Plesken Lie algebras which are induced from the group representations. Also they determine the irreducible Plesken Lie algebra representations.

	\par  In group theory, Schur introduced projective representation to determine all the finite groups contained in $GL(n, \mathbb{C})$. The projective representation of a finite group is a homomorphism from that group into the projective linear group (see cf. \cite{Karpilovsky}). These representations appear naturally in the study of ordinary representation of groups and are known to have many applications in other areas of Physics and Mathematics. 
	By definition, every linear representation of a group is projective, but the converse is not true. In the early 1900's, multipliers and covers were first studied by I. Schur in finite group theory. The multiplier of a group is now defined to be the second cohomology group $H^2(G, \mathbb{C}^*)$. In group theory, the multiplier of a group is unique up to isomorphism, but the corresponding cover is not necessarily unique. The notions of multilpliers and covers of Lie algebras were first introduced by P. G. Batten  in his Ph.D Thesis \cite{Batten}. Batten proved that the multilplier for a finite dimensional Lie algebra $L$ is isomorphic to $H^2(L, \mathbb{C})$, the second cohomology group of $L$. Also he showed that covers of Lie algebras are not isomorphic.
	
	\par In this article, our goal is to describe the projective representation of Lie algebras. The theory of projective representations involves understanding homomorphisms from a Lie algebra to the quotient Lie algebra $\mathfrak{gl}(V)/\{kI_V: k \in \mathbb{C} \}$.  Our aim is to explain the multipliers and covers of Lie algebras Plesken Lie algebras using projective representations. Here we explicitly showed the correspondence between the Schur multiplier $H^2(\mathcal{L}(G), \mathbb{C})$, equivalent central extensions and projectively equivalent projective representations of the Plesken Lie algebra $\mathcal{L}(G)$. 
	
	\par  Throughout this paper $\mathcal{L}(G)$ denote the Plesken Lie algebra of a finite group $G$ over the field of complex numbers.
	
	\section{Central extensions}
	In this section we describe the relations between a central extension of a Plesken Lie algebra $\mathcal{L}(G)$ by an abelian Plesken Lie algebra $\mathcal{L}(H)$ and $H^2(\mathcal{L}(G), \mathcal{L}(H))$ and it is shown that, up to equivalence of extensions, central extensions and second homology groups are essentially the same.
	\begin{definition} 
		$G$ and $H$ are two finite groups such that $\mathcal{L}(G)$ and $\mathcal{L}(H)$ are the corresponding Plesken Lie algebras of which $\mathcal{L}(H)$ is abelian. Then $(e; f, g)$ is an extension of $\mathcal{L}(G)$  by $\mathcal{L}(H)$	if there exists a Lie algebra $e$ such that the following is a short exact sequence : 
		\begin{center}
			$0 \to \mathcal{L}(H)\xrightarrow{f} e \xrightarrow{g} \mathcal{L}(G) \to 0$
		\end{center}
	\end{definition}
	An extension $(e;f,g)$ is central if $f(\mathcal{L}(H)) \subseteq Z(\mathcal{L}(G))$ where $Z(\mathcal{L}(G))$ is the center of $\mathcal{L}(G)$.
	\begin{example}\label{central} Consider the subgroups $G=\{ \begin{pmatrix}1&0&b\\0&1&c\\0&0&1\end{pmatrix} : b, c\in \mathbb{Z}/ p \mathbb{Z}\}$ and $H=\{ \begin{pmatrix}1&0&b\\0&1&0\\0&0&1\end{pmatrix} : b \in \mathbb{Z}/ p \mathbb{Z}\}$ of the Heisenberg group $H(\mathbb{Z}/ p \mathbb{Z})$. 
		Then the Plesken Lie algebras of $G$ and $H$ are	\begin{equation*}
			\begin{split}
				\mathcal{L}(G)&= span_{\mathbb{C}}\{ \begin{pmatrix}0&0&b\\0&0&c\\0&0&0\end{pmatrix} : b, c\in \mathbb{Z}/ p \mathbb{Z}\}
				\text{ and } \\
				\mathcal{L}(H)&= span_{\mathbb{C}}\{ \begin{pmatrix}0&0&b\\0&0&0\\0&0&0\end{pmatrix} : b\in \mathbb{Z}/ p \mathbb{Z}\}
			\end{split}
		\end{equation*}
		respectively. Consider 
		\begin{center}
			$e= span_{\mathbb{C}}\{ \begin{pmatrix}0&a&b\\0&0&c\\0&0&0\end{pmatrix} : a, b, c\in \mathbb{Z}/ p \mathbb{Z}\}$,
		\end{center}
		$f : \mathcal{L}(H)\to e$  is the identity inclusion and $g : e \to \mathcal{L}(G)$ is given by
		\begin{center}
			$g(\begin{pmatrix}0&a&b\\0&0&c\\0&0&0\end{pmatrix}) = \begin{pmatrix}0&0&a\\0&0&c\\0&0&0\end{pmatrix}$.
		\end{center}
		Then $f$ and $g$ are Lie algebra homomorphisms and $ker(g) = Im(f)$. Also $f(\mathcal{L}(H)) = \mathcal{L}(H) = Z(e)$, the center of $e$. Thus  $0 \to \mathcal{L}(H)\xrightarrow{f} e \xrightarrow{g} \mathcal{L}(G) \to 0$ is a central extension.


	\end{example}
	\subsection{Equivalent central extensions}
	Two central extensions
	\begin{center}
		$0\to \mathcal{L}(H) \xrightarrow{f} e \xrightarrow{g} \mathcal{L}(G) \to 0$
	\end{center}
	and
	\begin{center}
		$0\to \mathcal{L}(H) \xrightarrow{f'} e' \xrightarrow{g'} \mathcal{L}(G) \to 0$
	\end{center}
	of $\mathcal{L}(G)$ by $\mathcal{L}(H)$ are equivalent if there exists a Lie algebra homomorphism $\phi : e\to e'$ such that the following diagram commutes:
	\begin{equation}
		\begin{tikzpicture}
			\matrix (m) [matrix of math nodes, row sep=3em, column sep=3em]
			{ 0 & \mathcal{L}(H) & e  & \mathcal{L}(G) & 0 \\
				0 & \mathcal{L}(H) & e'  & \mathcal{L}(G) & 0 \\ };
			{ [start chain] \chainin (m-1-1);
				\chainin (m-1-2);
				{ [start branch=A] \chainin (m-2-2)
					[join={node[right,labeled] {id}}];}
				\chainin (m-1-3) [join={node[above,labeled] {f}}];
				{ [start branch=B] \chainin (m-2-3)
					[join={node[right,labeled] {\phi}}];}
				\chainin (m-1-4) [join={node[above,labeled] {g}}];
				{ [start branch=C] \chainin (m-2-4)
					[join={node[right,labeled] {id}}];}
				\chainin (m-1-5); }
			{ [start chain] \chainin (m-2-1);
				\chainin (m-2-2);
				\chainin (m-2-3) [join={node[above,labeled] {f'}}];
				\chainin (m-2-4) [join={node[above,labeled] {g'}}];
				\chainin (m-2-5); }
		\end{tikzpicture}
	\end{equation}
	\begin{remark}
		If such a homomorphism $\phi$ exists, then it must be an isomorphism ( see \cite{Batten}).
	\end{remark}
Now recall the  second cohomology group of a Lie algebra (cf.\cite{Batten}). Let $\mathcal{L}(G)$ be a Plesken Lie algebra of $G$ over $\mathbb{C}$. The set of 2-cocycles is given by
\begin{center}
	$Z^2(\mathcal{L}(G), \mathbb{C}) = \{ f: \mathcal{L}(G) \times  \mathcal{L}(G) \to \mathbb{C} : f \text{ is bilinear and } f([x, y], z)$\\\ \hspace{1in} $+f([y, z], x)+f([z,x],y) = 0  \ \forall x, y, z \in \mathcal{L}(G)\}$
\end{center}
and the set of 2-coboundaries are 
\begin{center}
	$B^2(\mathcal{L}(G), \mathbb{C})=\{ f: \mathcal{L}(G) \times  \mathcal{L}(G) \to \mathbb{C} : f \text{ is bilinear and there exists  } $\\ \hspace{1in} $\sigma : \mathcal{L}(G) \to \mathbb{C} \text{ such that } f(x, y)= -\sigma([x, y]) \}$.
\end{center}
Then the second cohomology group of $\mathcal{L}(G)$ is given by
\begin{center}
	$H^2(\mathcal{L}(G), \mathbb{C}) = \frac{Z^2(\mathcal{L}(G), \mathbb{C})}{B^2(\mathcal{L}(G), \mathbb{C})}.$
\end{center}
Note that two 2-cocycles $\alpha_1$ and $\alpha_2$ are said to be cohomologous ( that is, they have the same cohomology class) if there exists a linear map $\sigma : \mathcal{L}(G) \to  \mathbb{C}$ such that
\begin{center}
	$\alpha_2(x, y) - \alpha_1(x, y) = -\sigma([x, y])$
\end{center}

\begin{lemma}
	Let $\alpha \in Z^2(\mathcal{L}(G), \mathbb{C})$. Then $\alpha(x, x) = \alpha(x,0)= \alpha(0,x)= 0$
	
\end{lemma}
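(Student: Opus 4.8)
The plan is to dispatch the three equalities separately: the two involving $0$ are immediate from bilinearity, while the diagonal vanishing $\alpha(x,x)=0$ comes from the alternating (skew-symmetry) property of a $2$-cochain. The cocycle identity itself plays no role.

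First I would handle $\alpha(x,0)=0$ and $\alpha(0,x)=0$. Since $\alpha$ is bilinear, for each fixed $x$ the map $y\mapsto\alpha(x,y)$ is linear, so $\alpha(x,0)=\alpha(x,0+0)=\alpha(x,0)+\alpha(x,0)$, whence $\alpha(x,0)=0$; applying the same reasoning to the linear map $y\mapsto\alpha(y,x)$ in the first slot gives $\alpha(0,x)=0$.

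Next I would prove $\alpha(x,x)=0$. Here I use that a $2$-cochain of a Lie algebra over $\mathbb{C}$ is an alternating — equivalently, since the field has characteristic zero, skew-symmetric — bilinear form; this is the convention underlying $Z^{2}(\mathcal{L}(G),\mathbb{C})$, and it is also reflected in the shape of the coboundaries $f(x,y)=-\sigma([x,y])$, which are skew-symmetric because the bracket is. Applying skew-symmetry with both arguments equal to $x$ gives $\alpha(x,x)=-\alpha(x,x)$, so $2\alpha(x,x)=0$, and dividing by $2$ (legitimate over $\mathbb{C}$) yields $\alpha(x,x)=0$. Equivalently, one can expand $0=\alpha(x+y,x+y)$ by bilinearity together with the already-established diagonal vanishing to recover $\alpha(x,y)=-\alpha(y,x)$, so the two formulations are interchangeable.

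I do not anticipate a genuine obstacle: once the alternating/skew-symmetry property of a $2$-cochain is made explicit, every step is a one-line computation. The only point I would state carefully in the write-up is exactly that property, since skew-symmetry of $\alpha$ is precisely what powers the diagonal vanishing — note that when $\mathcal{L}(G)$ is abelian (as in Example~\ref{central}) the cocycle identity is vacuous, so it must indeed be skew-symmetry, and not the cocycle condition, that is doing the work here.
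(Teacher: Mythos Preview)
Your proof is correct. The paper's argument differs in one small respect: it obtains $\alpha(0,x)=0$ by specializing the cocycle identity to $x=y=z$, which collapses to $3\alpha(0,x)=0$, and only then invokes the alternating property to deduce $\alpha(x,0)=0$ and $\alpha(x,x)=0$. Your route is more elementary for the first two equalities, using nothing beyond bilinearity, and you correctly observe that the cocycle condition is not actually needed anywhere---your remark about the abelian case makes clear that it cannot be the source of skew-symmetry. Both arguments ultimately rest on the alternating property for the diagonal vanishing, so the only real difference is that the paper's detour through the cocycle identity is unnecessary.
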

\begin{proof}
	Suppose $\alpha \in Z^2(\mathcal{L}(G), \mathbb{C})$. Then for any $x, y, z \in \mathcal{L}(G)$,
	\begin{equation}
		\alpha([x, y],z)+\alpha([y,z],x)+\alpha([z,x],y) = 0
	\end{equation}
	
	Take $x=y= z$, then $(2)$ implies that $3\alpha(0,x) = 0$. That is, $\alpha(0,x)=0$. Then alternating  property of $\alpha$ gives that $\alpha(x, 0)=0$ and $\alpha(x,x)=0$.
\end{proof}
The following theorem gives us the relation between the schur multiplier $H^2(\mathcal{L}(G), \mathbb{C})$ and central extensions of $\mathcal{L}(G)$ by $\mathbb{C}$.
\begin{theorem}
	Let $\mathcal{L}(G)$ be a Plesken Lie algebra of a finite group $G$. Then there is a bijective correspondence between $H^2(\mathcal{L}(G), \mathbb{C})$ and equivalence class of central extensions of $\mathcal{L}(G)$ by $\mathbb{C}$.
\end{theorem}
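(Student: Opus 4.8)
The plan is to construct explicit maps in both directions between $H^2(\mathcal{L}(G),\mathbb{C})$ and the set of equivalence classes of central extensions of $\mathcal{L}(G)$ by $\mathbb{C}$, and then check they are mutually inverse. First, given a central extension $0\to\mathbb{C}\xrightarrow{f}e\xrightarrow{g}\mathcal{L}(G)\to 0$, I would fix a linear section $s:\mathcal{L}(G)\to e$ of $g$ (it exists since $g$ is a surjective linear map of complex vector spaces). Since $g\bigl([s(x),s(y)]-s([x,y])\bigr)=0$, the element $[s(x),s(y)]-s([x,y])$ lies in $\ker g=f(\mathbb{C})$, so there is a unique scalar $\alpha_s(x,y)\in\mathbb{C}$ with $f(\alpha_s(x,y))=[s(x),s(y)]-s([x,y])$. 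Bilinearity of $\alpha_s$ is immediate, and expanding the Jacobi identity for $s(x),s(y),s(z)$ in $e$ — using that $f(\mathbb{C})$ is central, so the mixed brackets involving the correction terms vanish — yields exactly the $2$-cocycle identity; hence $\alpha_s\in Z^2(\mathcal{L}(G),\mathbb{C})$. If $s'$ is another section, then $s'-s=f\circ\tau$ for some linear $\tau:\mathcal{L}(G)\to\mathbb{C}$, and a short computation gives $\alpha_{s'}(x,y)-\alpha_s(x,y)=-\tau([x,y])$, so the class $[\alpha_s]\in H^2(\mathcal{L}(G),\mathbb{C})$ is independent of the section. One also checks that if $\phi:e\to e'$ is an equivalence of extensions, then $\phi\circ s$ is a section of $g'$ with $\alpha_{\phi\circ s}=\alpha_s$, so equivalent extensions yield the same cohomology class.

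Conversely, given $\alpha\in Z^2(\mathcal{L}(G),\mathbb{C})$ I would put $e_\alpha=\mathcal{L}(G)\oplus\mathbb{C}$ with bracket $[(x,a),(y,b)]_\alpha=([x,y],\alpha(x,y))$. Skew-symmetry follows from the preceding Lemma (that $\alpha$ is alternating), and the Jacobi identity for $[\cdot,\cdot]_\alpha$ reduces, after cancellation of the $\mathcal{L}(G)$-components, precisely to the cocycle identity for $\alpha$; so $e_\alpha$ is a Lie algebra. Setting $f_\alpha(a)=(0,a)$ and $g_\alpha(x,a)=x$ gives a short exact sequence, and $f_\alpha(\mathbb{C})=0\oplus\mathbb{C}$ is central in $e_\alpha$, so $(e_\alpha;f_\alpha,g_\alpha)$ is a central extension. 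If $\alpha'(x,y)-\alpha(x,y)=-\sigma([x,y])$ for a linear $\sigma:\mathcal{L}(G)\to\mathbb{C}$, then $(x,a)\mapsto(x,a+\sigma(x))$ is a Lie algebra isomorphism $e_\alpha\to e_{\alpha'}$ commuting with the inclusions and projections, hence an equivalence of extensions; so cohomologous cocycles give equivalent extensions and $[\alpha]\mapsto(e_\alpha;f_\alpha,g_\alpha)$ is well defined on $H^2(\mathcal{L}(G),\mathbb{C})$.

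Finally I would verify the two assignments are inverse to one another. Starting from a cocycle $\alpha$, the section $x\mapsto(x,0)$ of $g_\alpha$ recovers $\alpha_s=\alpha$ on the nose, so one composite is the identity on $H^2(\mathcal{L}(G),\mathbb{C})$. Starting from an extension $(e;f,g)$ with chosen section $s$ and associated cocycle $\alpha_s$, the map $e_{\alpha_s}\to e$, $(x,a)\mapsto s(x)+f(a)$, is a linear isomorphism and, by the very definition of $\alpha_s$, a Lie algebra homomorphism; it makes the ladder diagram with identity maps on $\mathbb{C}$ and $\mathcal{L}(G)$ commute, hence is an equivalence of extensions. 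So the other composite is the identity on equivalence classes, and the two maps give the asserted bijection.

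The routine-but-delicate parts that I expect to absorb most of the work are the bookkeeping that converts the Jacobi identity into the cocycle condition (and back) in both directions, and the independence statements: that changing the section alters $\alpha_s$ only by a coboundary, and that an equivalence of extensions leaves the class unchanged. None of this presents a genuine conceptual obstacle — it is the standard Lie-algebra analogue of the classification of central extensions by the second cohomology, specialized to $\mathcal{L}(G)$ with abelian coefficient algebra $\mathbb{C}$ — and no property of the Plesken structure beyond $\mathcal{L}(G)$ being a finite-dimensional complex Lie algebra is needed.
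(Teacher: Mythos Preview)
Your proposal is correct and follows essentially the same approach as the paper: both attach to an extension the cocycle $\alpha_s(x,y)=[s(x),s(y)]-s([x,y])$ via a linear section, verify independence of the section and invariance under equivalence, and build the inverse from $\alpha$ as $\mathcal{L}(G)\oplus\mathbb{C}$ with bracket $[(x,a),(y,b)]=([x,y],\alpha(x,y))$ and section $x\mapsto(x,0)$. The only organizational difference is that you package bijectivity by exhibiting an explicit inverse (showing $e\simeq e_{\alpha_s}$ via $(x,a)\mapsto s(x)+f(a)$ and $e_\alpha\simeq e_{\alpha'}$ for cohomologous cocycles), whereas the paper defines only the forward map $\Psi$ and checks injectivity and surjectivity separately; the underlying computations are identical.
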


\begin{proof}
	Let $X$ denotes the equivalence classes of central extensions of $\mathcal{L}(G)$ by $\mathbb{C}$. Define $\Psi: X \to H^2(\mathcal{L}(G), \mathbb{C})$ by
	\begin{center}
		$\Psi([e]) = [\alpha]$
	\end{center}
	For $(e;f,g)$ be a central extension of $\mathcal{L}(G)$ by $\mathbb{C}$ and $s$ be a section map of $g$ (a linear map $s:e \to \mathcal{L}(G)$ such that $g \circ s = I_{\mathcal{L}(G)}$). Define $\alpha : \mathcal{L}(G) \times \mathcal{L}(G)\to \mathbb{C}$ by
	\begin{center}
		$\alpha(x, y) = [s(x), s(y)]- s([x, y]).$
	\end{center}
	Then $\alpha(x, y) \in \mathbb{C}$ and for any $x, y, z \in \mathcal{L}(G)$ and $c \in \mathbb{C}$
	\begin{equation}
		\begin{split}
			\alpha(cx+y,z)&= [s(cx+y), s(z)] - s([cx+y, z])\\
			&= [s(cx)+s(y), s(z)] - s([cx, z]+ [y, z])\\
			&=[cs(x), s(z)] + [s(y), s(z)] -  cs([x, z]) -s([y, z])\\
			&=c\alpha(x, z)+ \alpha(y, z)
		\end{split}
	\end{equation}
	and $\alpha(x, cy+z) = c\alpha(x, y)+ \alpha(x, z)$, hence $\alpha$ is bilinear. Also
	\begin{equation}
		\begin{split}
			\alpha([x,y],z)+&\alpha([y, z], x)+\alpha([z, x], y)\\
			& = [s([x, y]), s(z)] - s([[x,y], z]) +[s([y, z],s(x)] \\
			& \quad -s([[y, z], x]) + [s([z, x]), s(y)] - s([[z,x], y])\\
			&= [[s(x), s(y)] - \alpha(x, y), s(z)] + [[s(y), s(z)] - \alpha(y, z), s(x)]\\
			&\quad  [[s(z), s(x)] - \alpha(z, x), s(y)]
			-s([[x,y], z]+ [[y, z], x]+ [[z,x], y]) \\
			&=0
		\end{split}
	\end{equation}
	(using Jacobi identity in $e$ and $\mathcal{L}(G)$ and also using the fact that the extension is central). Thus $\alpha \in Z^2(\mathcal{L}(G), \mathbb{C})$. To see that $\alpha$ is independant of choice of section, consider a section $s': \mathcal{L}(G) \to e$ of $g$ and $\beta$ the corresponding 2-cocycle, then
	\begin{equation}
		\begin{split}
			g(s'(x) - s(x)) = 0 \quad &\Rightarrow s'(x)- s(x) \in Ker(g) \\
			&\Rightarrow s'(x)- s(x) \in Im(f) \cong \mathbb{C}\\
			&\Rightarrow s'(x) - s(x) = c_x \text{ for some } c_x \in \mathbb{C}
		\end{split}
	\end{equation}
	
	Define $\sigma : \mathcal{L}(G) \to e$ by
	\begin{center}
		$\sigma(x) = c_x$
	\end{center}
	then $\sigma$ is linear and
	
	\begin{equation}
		\begin{split}
			\beta(x,y) &= [s'(x), s'(y)] - s'([x, y]) \\
			&=[c_x+ s(x), c_y+s(y)] - (c_{[x, y]} + s([x, y])) \\
			&= [c_x, c_y] + [c_x, s(y)] +[s(x), c_y] \\
			&\hspace{0.75in} +[s(x), s(y)] - s([x, y]) - c_{[x, y]} \\
			&= \alpha(x, y) - \sigma([x, y])
		\end{split}
	\end{equation}
	thus $\alpha$ and $\beta$ are cohomologous. i.e., $[\alpha] = [\beta]$. ie., $\alpha$ does not depend on the choice of the section.
	\par Consider two equivalent central extensions $(e; f, g)$ and $(e'; f',g')$ of $\mathcal{L}(G)$ by $\mathbb{C}$. Then there is a Lie algebra homomorphism $\phi : e \to e'$ such that the following diagram commutes.
	\begin{equation}
		\begin{tikzpicture}
			\matrix (m) [matrix of math nodes, row sep=3em, column sep=3em]
			{ 0 & \mathbb{C} & e  & \mathcal{L}(G) & 0 \\
				0 & \mathbb{C} & e'  & \mathcal{L}(G) & 0 \\ };
			{ [start chain] \chainin (m-1-1);
				\chainin (m-1-2);
				{ [start branch=A] \chainin (m-2-2)
					[join={node[right,labeled] {id}}];}
				\chainin (m-1-3) [join={node[above,labeled] {f}}];
				{ [start branch=B] \chainin (m-2-3)
					[join={node[right,labeled] {\phi}}];}
				\chainin (m-1-4) [join={node[above,labeled] {g}}];
				{ [start branch=C] \chainin (m-2-4)
					[join={node[right,labeled] {id}}];}
				\chainin (m-1-5); }
			{ [start chain] \chainin (m-2-1);
				\chainin (m-2-2);
				\chainin (m-2-3) [join={node[above,labeled] {f'}}];
				\chainin (m-2-4) [join={node[above,labeled] {g'}}];
				\chainin (m-2-5); }
		\end{tikzpicture}
	\end{equation}
	
	ie., $\phi \circ f = f'$ and $g = g' \circ \phi$. Let $s: \mathcal{L}(G) \to e$ be a  section of $g$. Then there is a 2-cocycle $\alpha$  such that
	\begin{center}
		$\alpha(x, y) = [s(x), s(y)]  - s([x,y])$
	\end{center}
	Define $s' = \phi \circ s$, then $s' : \mathcal{L}(G) \to e'$ is a linear map such that
	\begin{center}
		$g'\circ s' = g'\circ \phi \circ s = g \circ s = I_{\mathcal{L}(G)}$.
	\end{center}
	ie., $s'$ is a section of $g'$ and there is a 2-cocycle $\beta \in Z^2(\mathcal{L}(G) , \mathbb{C})$ such that
	\begin{equation}
		\begin{split}
			\beta(x, y)&= [s'(x), s'(y)] - s'([x, y]) \\
			&= [(\phi\circ s)(x), (\phi\circ s)(y)] - (\phi\circ s)([x,y])\\
			&= \phi([s(x), s(y)] - s([x, y])) \\
			&= \phi(\alpha(x, y)) \\
			&= \alpha(x, y) \text{ \ as $\alpha(x,y) \in \mathbb{C}$}
		\end{split}
	\end{equation}
	hence two equivalent central extensions maps same element in $H^2(\mathcal{L}(G), \mathbb{C})$ via $\phi$ and thus $\Psi$ is well defined.
	\par Let $(e; f, g)$ and $(e'; f', g')$ are two central extensions of $\mathcal{L}(G)$ by $\mathbb{C}$ such that $\Psi([e]) = [\alpha]$ and $\Psi([e'])= [\beta]$. Suppose that $\Psi([e]) = \Psi([e'])$. That is, $[\alpha] = [\beta]$. Let $s$ and $s'$ be the sections of $g$ and $g'$ respectively. Then we have
	\begin{center}
		$\alpha(x, y) = [s(x), s(y)]  - s([x,y])$ and $\beta(x, y) = [s'(x), s'(y)]  - s'([x,y])$
	\end{center}
	Since $\alpha$ and $\beta $ are cohomologous, there is a linear map $\sigma : \mathcal{L}(G) \to \mathbb{C}$ such that
	\begin{center}
		$\alpha(x, y)- \beta(x, y)= -\sigma([x, y])$
	\end{center}
	Note that every element $x$ in $e$ can be uniquely written as $x= c_y + s(y)$ for some $c_y \in \mathbb{C}$ (since $g$ is a projection of $e$, $e = Ker(g)\oplus Im(g)$). 
	To prove $e$ and $e'$ are equivalent, we have to find a Lie algebra homomorphism $\phi : e \to e'$ such that the diagram (7) commutes.

	For define $\phi : e\to e'$ by
	\begin{center}
		$\phi(c_y+s(y))=c_y+s'(y)+\sigma(y)$
	\end{center}
	Then for any $c_y+s(y),  c_{y'}+s(y')\in e$ and $k \in\mathbb{C}$,
	\begin{equation}
		\begin{split}
			\phi(k(c_y+s(y))+ (c_{y'}+s(y')))&= \phi(kc_y+c_{y'}+ s(ky+ y'))\\
			&= kc_y+c_{y'}+ s'(ky+ y')+ \sigma(ky + y')\\
			&= kc_y+c_{y'}+ ks'(y)+ s'(y')+ k\sigma(y)+ \sigma(y')\\
			&= k\phi(c_y+s(y))+ \phi(c_{y'}+s(y'))
		\end{split}
	\end{equation}
	Thus $\phi$ is linear. Also
	\begin{equation}
		\begin{split}
			[\phi(c_y+s(y)), \phi(c_{y'}+s(y')]&= [c_y+s'(y)+\sigma(y), c_{y'}+s(y')+\sigma(y')]\\
			&= [c_y, c_{y'}] + [c_y, s'(y')]+ [c_y,\sigma(y')]\\
			&\quad +[s'(y), c_{y'}] + [s'(y), s'(y')]+ [s'(y),\sigma(y')]\\
			&\quad +[\sigma(y), c_{y'}] + [\sigma(y), s'(y')]+ [\sigma(y),\sigma(y')]\\
			&= [s'(y), s'(y')]
		\end{split}
	\end{equation}
	(since $\sigma:\mathcal{L}(G) \to \mathbb{C}$ and $\mathbb{C} \cong f(\mathbb{C}) \subseteq Z(e)$, $[\sigma(y), s(y')] = 0$ for all $s(y') \in e$) and
	\begin{equation}
		\begin{split}
			\phi([c_y+s(y), c_{y'}+s(y')])&= \phi([c_y, c_{y'}] + [c_y, s(y')]+[s(y), c_{y'}] + [s(y), s(y')]\\
			&= \phi([s(y), s(y')])\\
			&=\phi(\alpha(y, y')+ s([y, y']))\\
			&=\alpha(y, y')+ s'([y, y'])+\sigma([y, y'])\\
			&=\beta(y, y')+ s'([y, y'])\\
			&= [s'(y), s'(y')]
		\end{split}
	\end{equation}
	$(9), (10)$ and $(11)$ implies that $\phi$ is a Lie algebra homomorphism. Also
	\begin{equation}
		\begin{split}
			((g'\circ \phi) - g)(c_y+s(y))&=(g'\circ \phi)(c_y+s(y))-g_(c_y+ s(y))\\
			&= g'(c_y+s'(y)+\sigma(y))-g(c_y+s(y))\\
			&= 0
		\end{split}
	\end{equation}
	(since, $g'(c_y) = g'(\sigma(y)) = g(c_y)=0$ in $\mathcal{L}(G)$ because $c_y,\sigma(y) \in \mathbb{C}$). Thus $g'\circ \phi = g$. Also $\phi \circ f = f'$. That we got a Lie algebra homomorphism $\phi$ such that the diagram (7) commutes and thus $e$ and $e'$ are equivalent. Therefore, $\Psi$ is one-one.\\
	\par Let $\alpha$ be a 2-cocycle. Define a set $e= \mathcal{L}(G) \oplus \mathbb{C}$ with Lie bracket is given by
	\begin{center}
		$[(x, c), (y, d)] = ([x, y], \alpha(x,y))$
	\end{center}
	Then for any $(x, c), (y, d), (z, k)\in \mathcal{L}(G) \oplus \mathbb{C}$ and $p \in \mathbb{C}$,
	\begin{equation}
		\begin{split}
			[p(x, c)+(y, d), (z, k)]&= [(px+y, pc+d), (z, k)]\\
			&= ([px+y, z], \ \alpha(px+y,z)]\\
			&=(p[x, z]+[y, z], \ p\alpha(x, z)+\alpha(y, z) \\
			&=p[(x,c), (z, k)] + [(y, d), (z, k)]
		\end{split}
	\end{equation}
	Similarly,  $[(x, c), p(y, d)+(z, k)]= [(x, c), (y, d)]+p[(x, c), (z, k)]$ and thus bracket operation is bilinear.\\
	For any $(x, c)\in \mathcal{L}(G)$,
	\begin{center}
		$[(x, c),(x, c)] = ([x, x], \alpha(x, x)) = (0,0)$
	\end{center}
	Also,
	\begin{equation}
		\begin{split}
			[[(x, c),(y, d)], (z,k)]&+[[(y,d),(z,k)],(x,c)]+[[(z,k), (x, c)], (y, d)] \\
			&= [([x,y], \alpha(x,y)), \ (z,k)] +[([y, z],\alpha(y, z)), \  (x, c)] \\
			&\qquad +[([z, x], \alpha(z, x)), \ (y, d)]\\
			&= ([[x, y], z], \ [\alpha(x,y), k]) +([[y, z], x],\ [\alpha(y, z), c])\\
			&\qquad +([[z, x], y], \  [\alpha(z, x), d])\\
			&=(0,0)
		\end{split}
	\end{equation}
	(using tha Jacobi identity of $\mathcal{L}(G)$ and also the fact  that $\mathbb{C}$ is an abelian Lie algebra). Thus $e$ becomes a Lie algebra.\\
	Define
	\begin{center}
		$0\to \mathbb{C} \xrightarrow{f} \mathcal{L}(G)\oplus \mathbb{C} \xrightarrow{g} \mathcal{L}(G) \to 0 $
	\end{center}
	as follows:\\
	\begin{center}
		$f(c)= (0,c)$ and $g(x, c) = x$
	\end{center}
	Then
	\begin{center}
		$f(kc+d) = (0, kc+d) = kf(c)+f(d)$
	\end{center}
	That is, $f$ is linear. Also
	\begin{center}
		$[f(c), f(d)] = [(0,c), (0, d)] = ([0, 0], \alpha(0,0)) =0 $
	\end{center}
	and
	\begin{center}
		$ f([c, d]) = f(0)  = 0$
	\end{center}
	together implies $f([c, d]) = [f(c), f(d)]$. Thus $f$ is a Lie algebra homomorphism. Also
	\begin{equation*}
		\begin{split}
			g(k(x, c)+(y, d))&= g(kx+y, kc+d)\\
			&= kx+y\\
			&= kg(x, c)+g(y, d)
		\end{split}
	\end{equation*}
	and
	\begin{center}
		$[g(x, c), g(y, d)]= [x, y]$ and $g([(x, c), (y, d)]) = [x,y]$
	\end{center}
	implies that $[g(x, c), g(y, d)] = g([(x, c), (y, d)])$. Thus $g$ is a Lie algebra homomorphism.\\
	Also,
	\begin{equation}
		\begin{split}
			Ker(g)&= \{(x,c) \in \mathcal{L}(G)\oplus \mathbb{C} : g(x, c) = 0\}\\
			&=\{(x,c) \in \mathcal{L}(G)\oplus \mathbb{C} : x= 0 \}\\
			&=\{(0, c) : c \in \mathbb{C} \}\\
			&=Im(f)
		\end{split}
	\end{equation}
	and for $(0, c) \in f(\mathbb{C})$  and $(x, d) \in \mathcal{L}(G)\oplus \mathbb{C}$,
	\begin{equation}
		\begin{split}
			[(0,c), (x, d)] = ([0,x], \alpha(0,x)) =(0,0)
		\end{split}
	\end{equation}
	That is, $f(\mathbb{C}) \subseteq Z(\mathcal{L}(G) \oplus \mathbb{C})$,the center of the Lie algebra $\mathcal{L}(G) \oplus \mathbb{C})$. Therefore, $(e; f, g)$ is a central extension of $\mathcal{L}(G)$ by $\mathbb{C}$.\\
	Consider a section $s: \mathcal{L}(G) \to e$ defined by $s(x) = (x, 0)$. Then $(g \circ s)(x) = g(x,0) = x= I_{\mathcal{L}(G)}(x)$ and
	\begin{equation}
		\begin{split}
			[s(x), s(y)]&=[(x,0), (y, 0)]\\
			&=([x, y],[0,0]+\alpha(x,y))\\
			&= ([x, y], 0)+(0, \alpha(x,y))\\
			&= s([x, y])+f(\alpha(x, y))\\
			&= s([x, y])+\alpha(x, y)
		\end{split}
	\end{equation}
	(since, $f$ is an inclusion). That is,
	\begin{center}
		$\alpha(x, y) = [s(x), s(y)] - s([x, y])$
	\end{center}
	That is, $(e; f, g)$ is a central extension with $\Psi([e]) = [\alpha]$. Hence $\Psi$ is onto.

\section{Projective representations of Plesken Lie algebras}
A projective representation of a Plesken Lie algebra $\mathcal{L}(G)$ is a Lie algebra homomorphism $\phi: \mathcal{L}(G) \to \mathfrak{pgl}(V)$ (for some finite dimensional vector space $V$), where $\mathfrak{pgl}(V)$ is the quotient Lie algebra $\mathfrak{gl}(V)/\{kI_V: k \in \mathbb{C} \}$.
\par Every linear representation of $\mathcal{L}(G)$ is a projective representation. For, consider a linear representation $\rho : \mathcal{L}(G) \to \mathfrak{gl}(V)$  of $\mathcal{L}(G)$ and the natural homomorphism $\pi : \mathfrak{gl}(V) \to \mathfrak{pgl}(V)$, then the composition $\pi \circ \rho  : \mathcal{L}(G) \to  \mathfrak{pgl}(V)$ is a projective representation.
\par The following proposition gives a characterization for the projective representation of Lie algebras.

\begin{prop}
	Let  $G$ be a finite group and $\phi$ a projective representation  of $\mathcal{L}(G)$ on $V$. Then there is a linear map $\Phi : \mathcal{L}(G) \to \mathfrak{gl}(V)$ and a bilinear map $\alpha :\mathcal{L}(G)\times \mathcal{L}(G) \to \mathbb{C}$ such that
	\begin{equation}
		[\Phi(x),\Phi(y)]=\alpha(x, y)I_V+\Phi([x, y]) \ \text{ for all }\ x, y \in \mathcal{L}(G)
	\end{equation}
	Conversely, if there is a linear map $\Phi$ and a bilinear map $\alpha$ satisfying $(18)$, then  $\pi \circ \Phi : \mathcal{L}(G) \to \mathfrak{pgl}(V)$  where $\pi : \mathfrak{gl}(V) \to \mathfrak{pgl}(V)$ is the canonical homomorphism, is a projective representation of $\mathcal{L}(G)$. 
\end{prop}
\begin{proof}
	Suppose $\phi : \mathcal{L}(G) \to \mathfrak{pgl}(V)$ is a projective representation and $\pi$ is the natural homomorphism from $\mathfrak{gl}(V)$ to $\mathfrak{pgl}(V)$. Let $X$ be the coset representatives of $\mathfrak{gl}(V)$ in $\mathfrak{pgl}(V)$. Define $\Phi : \mathcal{L}(G) \to \mathfrak{gl}(V)$ by choosing for each $x \in \mathcal{L}(G)$, an element $A_x$ in $\mathfrak{gl}(V)$ such that $\pi(A_x) = \phi(x)$. Then ,
	\begin{center}
		$\Phi(x) = A_x$
	\end{center}
	is a linear map. Also,
	\begin{equation}
		\begin{split}
			[\Phi(x),\Phi(y)] - \Phi([x,y])&= [A_x, A_y]  - A_{[x, y]} \\
			&= \alpha(x, y)I_V
		\end{split}
	\end{equation}
	(since, $\pi([A_x, A_y]  - A_{[x, y]})= [\phi(x), \phi(y)] - \phi([x,y])= 0$ in $\mathfrak{pgl}(V)$) where $\alpha : \mathcal{L}(G) \times \mathcal{L}(G) \to \mathbb{C}$ is a bilinear map. Conversely given a linear map $\Phi : \mathcal{L}(G) \to \mathfrak{gl}(V)$ satisfying $(18)$.
	\par Consider
	\begin{center}
		$\mathcal{L}(G) \xrightarrow{\Phi} \mathfrak{gl}(V) \xrightarrow{\pi} \mathfrak{pgl}(V)$
	\end{center}
	where $\pi$ is the natural homomorphism.	Then
	\begin{equation}
		\begin{split}
			[(\pi \circ \Phi)(x), (\pi \circ \Phi)(y)]&= [\pi(\Phi(x)), \pi(\Phi(y))] \\
			&=\pi([\Phi(x), \Phi(y)])\\
			&=\pi(\alpha(x,y)I_V + \Phi([x,y])) \\
			&=\pi(\Phi([x,y])) = (\pi \circ \Phi)([x, y])
		\end{split}
	\end{equation}
	That is, $\pi \circ \Phi$ is a projective representation of $\mathcal{L}(G)$.
\end{proof}

Now we can define the projective representation of $\mathcal{L}(G)$ as follows:
\begin{definition}
	A linear map $\Phi: \mathcal{L}(G) \to \mathfrak{gl}(V)$ is a projective representation of $\mathcal{L}(G)$ if there exists a bilinear map $\alpha :\mathcal{L}(G) \times \mathcal{L}(G) \to \mathbb{C}$ such that
	\begin{center}
		$[\Phi(x),\Phi(y)]= \alpha(x, y)I_V + \Phi([x,y])$
	\end{center}
	
\end{definition}
Recall that an extension $0 \to \mathcal{L}(H)\xrightarrow{f} e \xrightarrow{g} \mathcal{L}(G) \to 0$ \textit{splits} if there exist a Lie algebra homomorphism $s : e \to \mathcal{L}(G)$ such that $g \circ s = 1_{\mathcal{L}(G)}$. It is also known that an extension splits if and only if $e$ is isomorphic to the semidirect product Lie algebra $\mathcal{L}(H) \rtimes \mathcal{L}(G)$ (see \cite{weibel}). 
\begin{example}
	Consider the central extension $0 \to \mathcal{L}(H)\xrightarrow{f} e \xrightarrow{g} \mathcal{L}(G) \to 0$ in Example \ref{central}. This is not a split extension since $e \ncong \mathcal{L}(H) \rtimes \mathcal{L}(G)$. Define $\alpha : \mathcal{L}(G) \times \mathcal{L}(G) \to \mathcal{L}(H)$ by 
	\begin{center}
		$\alpha(x, y) = [s(x), s(y)] - s([x, y])$
	\end{center}
	and $\Phi : \mathcal{L}(G) \to \mathfrak{gl}(V)$ by
	\begin{center}
		$\Phi(x) = s(x) I_V$
	\end{center}
	Then $\alpha$ is a bilinear map and $\Phi$ is a linear map such that
	\begin{center}
		$[\Phi(x), \Phi(y)] = \alpha(x, y)I_V + \Phi([x, y])$.
	\end{center}
\end{example}

Consider the projective representation $\Phi: \mathcal{L}(G)\to \mathfrak{gl}(V)$, then for any $\Phi(x),\Phi(y), \Phi(z) \in \mathfrak{gl}(V)$,
\begin{equation}
	[[\Phi(x),\Phi(y)],\Phi(z)]+ [[\Phi(y),\Phi(z)],\Phi(x)]+ [[\Phi(z),\Phi(x)],\Phi(y)]= 0
\end{equation}
(by Jacobi identity in $\mathfrak{gl}(V)$). Also 
\begin{equation}
	\begin{split}
		[[\Phi(x),\Phi(y)],\Phi(z)]&= [\alpha(x,y)I_V+\Phi([x, y]), \Phi(z)]\\
		&= \alpha(x, y)[I_V, \Phi(z)]+ [\Phi([x, y]), \Phi(z)]\\
		&= [\Phi([x, y]), \Phi(z)]\\
		&= \alpha([x,y],z)I_V+ \Phi([[x,y],z])
	\end{split}
\end{equation}
and 
\begin{equation}
	\begin{split}
		[[\Phi(x),\Phi(y)],\Phi(z)]+& [[\Phi(y),\Phi(z)],\Phi(x)]+ [[\Phi(z),\Phi(x)],\Phi(y)]\\
		&= \alpha([x,y],z)I_V+ \Phi([[x,y],z])+\alpha([y,z],x)I_V\\
		&\quad + \Phi([[y,z],x])+\alpha([z,x],y)I_V+ \Phi([[z,x],y]) \\
		&=(\alpha([x,y],z)+\alpha([y,z],x)+\alpha([z,x],y))I_V\\
		&\quad +\Phi([[x,y],z]+[[y,z],x]+[[z,x],y])\\
		&=(\alpha([x,y],z)+\alpha([y,z],x)+\alpha([z,x],y))I_V
	\end{split}
\end{equation}
Then from $(23)$,
\begin{equation}
	\alpha([x,y],z)+\alpha([y,z],x)+\alpha([z,x],y)=0
\end{equation}
$\alpha$ is a bilinear map and satisfies the 2-cocycle condition. That is, $\alpha \in Z^2(\mathcal{L}(G),\mathbb{C})$.  Thus the projective representation $\Phi$ is also referred to as an \textit{$\alpha$-representation } on the vector space $V$.

\subsection{Projectively equivalent $\alpha$-representations of Plesken Lie algebras}
Certain equivalences on projective representations $\mathcal{L}(G)$ of Plesken Lie algebra  of a finite group $G$ defined below turnsout to be significant in the study of representations of Plesken Lie algebras.

\begin{definition}
	Let $\Phi_1$ be an $\alpha_1$-representation and $\Phi_2$ be an $\alpha_2$-representation of $\mathcal{L}(G)$. Then $\Phi_1$ and $\Phi_2$ are projectively equivalent if there exists an isomorphism $f : V \to W$ and a linear map $\delta: \mathcal{L}(G) \to \mathbb{C}$ such that
	\begin{center}
		$\Phi_2(x) = f\circ \Phi_1(x)\circ f^{-1} + \delta(x)I_W$ for all $x \in \mathcal{L}(G)$
	\end{center}
	
	If $\delta(x)= 0$ for all $x \in \mathcal{L}(G)$, then $\Phi_1$ and $\Phi_2$ are linearly equivalent.
\end{definition}

\begin{example}
	Let $\Phi_1$ and $\Phi_2$ be two projective representations of $\mathcal{L}(D_8)$ (where $D_8= <a,b : a_4 = b^2= e, bab = a^{-1} >$) where : \\
	$\Phi_1: \mathcal{L}(D_8) \to \mathfrak{gl}(\mathbb{C}^2)$ is defined by
	\begin{center}
		$\Phi_1(c\hat{a}) = c\begin{pmatrix}0&2\\-2&0\end{pmatrix}$
	\end{center}
	and $\Phi_2: \mathcal{L}(D_8) \to \mathfrak{gl}(\mathbb{C}^2)$ is defined by
	\begin{center}
		$\Phi_2(c\hat{a}) = c \begin{pmatrix}2i&0\\0&-2i\end{pmatrix}$
	\end{center}
	Let $f :\mathbb{C}^2\to \mathbb{C}^2$ be
	\begin{center}
		$f(v_1,v_2) = \begin{pmatrix}\frac{v_1-iv_2}{2}, \frac{v_1+iv_2}{2}\end{pmatrix}$
	\end{center}
	and $\delta  : \mathcal{L}(D_8) \to \mathbb{C}$ be $\delta(x) = 0$ for all $x\in \mathcal{L}(D_8)$. It is easy to verify that
	\begin{center}
		$ \Phi_2(x)(v_1,v_2) = ( f \circ \Phi_1(x) \circ f^{-1})(v_1,v_2) + \delta(x)I_{\mathbb{C}^2}(v_1, v_2)$ for all $x \in \mathcal{L}(D_8)$
	\end{center}
	thus $\Phi_1$ and $\Phi_2$ are linearly equivalent.
\end{example}
Next we proceed to establish the correspondence between  two projectively equivalent projective representations of $\mathcal{L}(G)$ and $H^2(\mathcal{L}(G), \mathbb{C})$. 
\begin{lemma}
	Let $\Phi_1$ be an $\alpha_1$-representation and $\Phi_2$ be an $\alpha_2$-representation of $\mathcal{L}(G)$. If $\Phi_1$ is projectively equivalent to $\Phi_2$, then $\alpha_1$ and $\alpha_2$ are cohomologous.
\end{lemma}
\begin{proof}
	Suppose $\Phi_1$ and $\Phi_2$ are projectively equivalent projective representations of $\mathcal{L}(G)$. Then by definition, there exists an isomorphism $f : V \to W$ and a linear map $\delta: \mathcal{L}(G) \to \mathbb{C}$ such that
	\begin{center}
		$\Phi_2(x)(v) = f \circ \Phi_2(x) \circ f^{-1}+\delta(x)I_W$ for all $x \in \mathcal{L}(G)$
	\end{center}
	Then we have,
	\begin{equation*}
		\begin{split}
			\alpha_2(x,y)I_W&=[\Phi_2(x),\Phi_2(y)] - \Phi_2([x,y])\\
			&=[f \circ \Phi_2(x) \circ f^{-1}+\delta(x)I_W, f \circ \Phi_2(y) \circ f^{-1}+\delta(x)I_W] \\
			&\hspace{1in}- (f \circ \Phi_2([x,y]) \circ f^{-1}+\delta([x, y])I_W)\\
			&=[f \circ \Phi_2(x) \circ f^{-1}, f \circ \Phi_2(y) \circ f^{-1}] - f \circ \Phi_2([x, y]) \circ f^{-1} - \delta([x, y])I_W\\
			&=f\circ ([\Phi_1(x), \Phi_1(y)]- \Phi_1([x, y]))\circ f^{-1} - \delta([x, y])I_W\\
			&= f\circ \alpha_1(x, y)I_V\circ f^{-1} - \delta([x, y])I_W\\
			&=\alpha_1(x, y)I_W - \delta([x, y])I_W\\
		\end{split}
	\end{equation*}
	Thus $\alpha_1$ and $\alpha_2$ are cohomologous.
\end{proof}

\begin{lemma}
	Let $\Phi_1$ be an $\alpha_1$-representation of $\mathcal{L}(G)$ and $\alpha_2$ is a 2-cocycle cohomologous to $\alpha_1$, then there exists an $\alpha _2$-representation $\Phi_2$ of $\mathcal{L}(G)$ which  is projectively equivalent to $\Phi_1$.
\end{lemma}

\begin{proof}
	Suppose $\Phi_1$ is an $\alpha_1$-representation of $\mathcal{L}(G)$ and $\alpha_2$ is a 2-cocycle which is cohomologous to $\alpha_1$. Then there exists a linear map $\sigma : \mathcal{L}(G) \to \mathbb{C}$ such that
	\begin{equation}
		\alpha_1(x, y) - \alpha_2(x, y) = -\sigma([x, y])
	\end{equation}
	define $\Phi_2 : \mathcal{L}(G) \to \mathfrak{gl}(V)$ by
	\begin{center}
		$\Phi_2(x)  = \Phi_1(x) - \sigma(x)I_V$.
	\end{center}
	Clearly $\Phi_2$ is a linear map and
	\begin{equation}
		\begin{split}
			[\Phi_2(x), \Phi_2(y)]&= [\Phi_1(x) - \sigma(x)I_v, \Phi_1(y) - \sigma(y)I_v] \\
			&= [\Phi_1(x), \Phi_1(y)]\\
			&= \alpha_1(x, y)I_V + \Phi_1([x, y])\\
			&= \alpha_1(x, y)I_V+ \Phi_2([x, y]) + \sigma([x, y])I_V\\
			&=\alpha_2([x, y])I_V +  \Phi_2([x, y])
		\end{split}
	\end{equation}
	ie., $\Phi_2$ is an $\alpha_2$-representation projectively equivalent to $\Phi_1$ (take $f = I_V$ ).
\end{proof}
Thus, any projective representation up to projective equivalence defines an element of the second cohomology group $H^2(\mathcal{L}(G), \mathbb{C})$.

\section{Conclusion}
\begin{itemize}
	\item The classical First and Second Whitehead Lemmas state that the first, respectively second, cohomology group of a finite-dimensional semisimple Lie algebra with coefficients in any finite-dimensional module vanishes (see \cite{whitehead}). Thus from section 3, we can conclude that for semisimple Plesken Lie algebras, the projective representations are precisely the linear representations. Also we can conclude that the central extensions of a semisimple Plesken Lie algebra $\mathcal{L}(G)$ by $\mathbb{C}$ are equivalent and it is the Lie algebra $\mathcal{L}(G) \oplus \mathbb{C}$ with the Lie bracket $[(x, c), (y,d)] = ([x,y], [c, d])$ for $(x, c), (y,  d) \in \mathcal{L}(G) \oplus \mathbb{C}$.
	\item We have there is a bijective correspondence between set of equivalence class of central extensions of $\mathcal{L}(G)$ and $\mathbb{C}$ and $H^2(\mathcal{L}(G), \mathbb{C})$. Also there is a bijective correspondence between set of projective equivalence class of projective representations of $\mathcal{L}(G)$ and  $H^2(\mathcal{L}(G), \mathbb{C})$. Thus we can classify the projectively equivalent projective representations of $\mathcal{L}(G)$ by central extensions of $\mathcal{L}(G)$ by $\mathbb{C}$.
\end{itemize}

\end{proof}

\end{document}